\newcommand*{\id}{\textup{id}}
\numberwithin{equation}{section}
\theoremstyle{plain}
\newtheorem{thm}{Theorem}[section]
\newtheorem{defi}[thm]{Definition}
\theoremstyle{remark}
\newtheorem{rem}[thm]{Remark}
\numberwithin{equation}{section}
\newcommand{\ot}{\otimes}
\newcommand{\beq}{\begin{equation}}
\newcommand{\eeq}{\end{equation}}
\newcommand{\cR}{\mathcal{R}}
\newcommand{\rlbicross}{{\triangleright\!\!\!\blacktriangleleft}}
\newcommand{\lbiprod}{{>\!\!\!\triangleleft\kern-.33em\cdot}}
\newcommand{\rbiprod}{{\cdot\kern-.33em\triangleright\!\!\!<}}
\newcommand{\uz}{{}^{\scriptscriptstyle{[0]}}}
\renewcommand{\o}{{}_{\scriptscriptstyle{(1)}}}
\newcommand{\umo}{{}^{\scriptscriptstyle{[-1]}}}
\renewcommand{\t}{{}_{\scriptscriptstyle{(2)}}}
\renewcommand{\th}{{}_{\scriptscriptstyle{(3)}}}
\newcommand{\fo}{{}_{\scriptscriptstyle{(4)}}}
\newcommand{\five}{{}_{\scriptscriptstyle{(5)}}}
\newcommand{\si}{{}_{\scriptscriptstyle{(6)}}}
\newcommand{\se}{{}_{\scriptscriptstyle{(7)}}}
\newcommand{\ho}{{}^{\scriptscriptstyle{(1)}}}
\newcommand{\htw}{{}^{\scriptscriptstyle{(2)}}}
\newcommand{\la}{{\triangleright}}
\newcommand{\ra}{{\triangleleft}}
\DeclareMathOperator{\tens}{\otimes}
\begin{document}

\author{Xiao Han}
\address{Queen Mary University of London\\
		School of Mathematical Sciences, Mile End Rd, London E1 4NS, UK}

\email{x.h.han@qmul.ac.uk}

\keywords{Hopf algebroid, quantum group, bicrossproduct, crossed module, 2-group.}

\title{On bicrossed modules of Hopf algebras}
%

\begin{abstract}
We use Hopf algebroids to formulate a notion of a noncommutative and non-cocommutative Hopf 2-algebra. We show how these arise
from a bicrossproduct Hopf algebra with Peiffer identities. In particular, we show that for a Hopf algebra $H$ with bijective antipode, the mirror bicrossproduct Hopf algebra $H\rlbicross H_{cop}$ is a Hopf 2-algebra. We apply the theorem on Sweedler's Hopf algebra as an example.
 \end{abstract}

\maketitle
\begin{quote}  2020 Mathematics Subject Classification: 16T05,  20G42\end{quote}
\section{Introduction}

It is given in \cite{BL} that a strict 2-group is a strict 2-category with one object in which all 1-morphisms and 2-morphisms
are invertible. It is also known \cite{Por08} that a strict 2-group is equivalent to a crossed module, which consists of two groups equipped with the so-called  Peiffer identity. Motivated by the highly developed theory of quantum groups and quantum groupoids with significant application in math and physics,
we are going to use these to
formulate a Hopf 2-algebra as a quantization of a 2-group.

In \cite{Yael11}, Y. Fr\'egier and  F. Wagemann had constructed a `quantum' crossed module in terms of a crossed module and a crossed comodule of Hopf algebras. In \cite{XH}, we had constructed a Hopf 2-algebra in the sense of a quantization of a strict 2-group. However, the base algebra of the
 Hopf 2-algebra is commutative.

In this paper, we manage to construct noncommutative and non-cocommutative Hopf 2-algebras (with possibly noncommutative base algebras) in terms of bicrossproduct Hopf algebras with Peiffer identities. On the one hand, as a quantization of a strict 2-group, a Hopf 2-algebra consists of a  Hopf algebra structure on the base algebra $B$,
which plays the role of the quantization of the `1-morphism' group. A Hopf 2-algebra also consists of a Hopf algebra and a Hopf algebroid structure on the same underlying algebra $H$, which play the role of the quantization of the `horizontal' group and `vertical' groupoid structure on the `2-morphism', such that the `horizontal'  and `vertical' coproducts are cocommutative, i.e.
\[(\blacktriangle\otimes \blacktriangle)\circ \Delta=(\id_{H}\otimes \mathrm{flip}\otimes \id_{H})\circ (\Delta\otimes_{B}\Delta)\circ \blacktriangle,\]
where $\Delta$ is the coproduct of the Hopf algebra $H$ and $\blacktriangle$ is the coproduct of the Hopf algebroid $H$.
 On the other hand, in the view of quantization of a crossed module, we have combined both crossed module and crossed comodule structures on a bicrossproduct Hopf algebra in Theorem \ref{thm. bicrossed module}, and we call the resulting structure a bicrossed module of Hopf algebras. The main result of this paper is to show that a bicrossed module of Hopf algebras is a Hopf 2-algebra. Moreover, for any Hopf algebra $H$ with a bijective antipode, we can show that the `mirror' bicrossproduct Hopf algebra \cite{Majid1} with the inverse of the antipode $(H\rlbicross H_{cop}, S^{-1}_{H})$ is a bicrossed module of Hopf algebras and hence a Hopf 2-algebra. Finally, we give a concrete example $(H^{4}\rlbicross H^{4}_{cop}, S^{-1}_{H^{4}})$, where $H^{4}$ is the Sweedler's Hopf algebra, and this results in  a noncommutative and non-cocommutative Hopf 2-algebra since $H^{4}$ is neither commutative nor cocommutative.

We also note another interesting work \cite{Majid3}, which constructs a  quantum 2-group in a different approach.

\subsection*{Acknowledgements} I thank Shahn Majid, Ralf Meyer and Chenchang Zhu for helpful discussions. This project has received funding from the European Union’s Horizon 2020 research and innovation programme under the Marie Sklodowska-Curie grant agreement No 101027463”.

\section{Basic algebraic preliminaries} \label{sec2}
A bialgebra $H$ is an unital algebra and a counital coalgebra, such that the coproduct $\Delta$ and the counit $\varepsilon$ are algebra maps. If we use the sumless Sweedler notation $\Delta(h)=h\o\otimes h\t$ then $(hg)\o\ot (hg)\t=h\o g\o\ot h\t g\t$  and $\varepsilon(hg)=\varepsilon(h)\varepsilon(g)$ for any $h, g\in H$. A  Hopf algebra is a bialgebra with a linear map $S: H\to H$, such that $h\o S(h\t)=\varepsilon(h)=S(h\o) S(h\t)$. The opposite bialgebra (resp. coopposite bialgebra) of $H$ is denoted by $H^{op}$ (resp. $H_{cop}$). A right module algebra of an Hopf algebra $H$ is an algebra $A$ and a right $H$-module with a right action $\ra$ of $H$, such that $(ab)\ra h=(a\ra h\o)(b\ra h\t)$ and $1_{A}\ra h=\varepsilon(h)$, for any $a,b\in A$ and $h\in H$. Dually, a left comodule algebra of $H$ is a coalgebra and a left $H$-comodule with a left coaction $\delta: C\to H\ot C$, such that $c\umo\ot c\uz\o\ot c\uz\t=c\o\umo c\t\umo\ot c\o\uz \ot c\t\uz$ and
$c\umo \varepsilon(c\uz)=\varepsilon (c)$, for any $c\in C$. Here we use the extended Sweedler notation $\delta(c)=c\umo\ot c\uz$.

Let $B$ be an unital algebra over the field $k$, a $B$-coring $C$ is a coalgebra in ${}_{B}\mathcal{M}_{B}$. In other words, there is a coproduct $\Delta: C\to C\ot_{B}C$ and an counit $\delta: C\to B$, such that $(\Delta\ot_{B}\id)\circ \Delta=(\id\ot_{B}\Delta)\circ \Delta$ and $(\varepsilon\ot_{B}\id)\circ \Delta=(\id\ot_{B}\varepsilon)\circ \Delta=\id$.
We also recall the definition right bialgebroids:
\begin{defi} \cite{BS} \label{def:right.bgd} Let $B$ be a unital algebra. A right $B$-bialgebroid is an algebra $\cR$ with commuting algebra maps (`source' and `target' maps) $s:B\to \cR$ and $t:B^{op}\to \cR$ and a $B$-coring for the bimodule structure $b\la X\ra b'=X s(b') t(b)$ for any $X\in \cR$ and $b, b'\in B$, which is compatible in the sense
\begin{itemize}
\item[(i)] The coproduct $\Delta$ corestricts to an algebra map  $\cR\to \cR\times_{B} \cR$ where
\begin{equation*} \cR\times_{B} \cR :=\{\ \sum_i X_i \ot_{B} Y_i\ |\ \sum_i s(b)X_i \ot_{B} Y_i=
\sum_i X_i \ot_{B}  t(b)Y_i  ,\quad \forall b\in B\ \}\subseteq \cR\tens_{B}\cR,
\end{equation*}
 is an algebra via factorwise multiplication.
\item[(ii)] The counit $\varepsilon$ is a right character in the following sense:
\begin{equation*}\varepsilon(1_{\cR})=1_{B},\quad \varepsilon(s(\varepsilon(X))Y)=\varepsilon(XY)=\varepsilon(t(\varepsilon(X))Y)\end{equation*}
for all $X,Y\in \cR$ and $b\in B$.
\end{itemize}
\end{defi}
In the following, we will also use the sumless Sweedler indexes to denote the coproducts of right bialgebroids, namely, $\Delta(X)=X\ho\ot_{B}X\htw$.

\begin{defi}\label{defHopf1}
A right bialgebroid $\cR$ is a right Hopf algebroid if
\[\lambda: \cR\ot_{B^{op}}\cR\to \cR\ot_{B}\cR,\quad
    \lambda(X\ot_{B^{op}} Y)=XY\ho\ot_{B}Y\htw\]
is invertible, where $\tens_{B^{op}}$ is induced by $t$ (so $Xt(b)\ot_{B^{op}}Y=X\ot_{B^{op}}t(b)Y$, for all $X, Y\in \cR$ and $b\in B$) while the $\tens_{B}$ is the standard one, i.e. $Xs(b)\ot_{B}Y=X\ot_{B}Yt(b)$.\\
Similarly, A right bialgebroid $\cR$ is an anti-right Hopf algebroid if
\[\mu: \cR\ot^{B^{op}}\cR\to \cR\ot_{B}\cR,\quad
    \mu(X\ot^{B^{op}} Y)=X\ho\ot_{B}YX\htw\]
is invertible, where $\tens^{B^{op}}$ is induced by $s$ (so $s(b)X\ot^{B^{op}}Y=X\ot^{B^{op}}Ys(b)$, for all $X, Y\in \cR$ and $b\in B$) while the $\tens_{B}$ is the standard one, i.e. $Xs(b)\ot_{B}Y=X\ot_{B}Yt(b)$.
\end{defi}
We adopt the shorthand
\begin{equation}\label{X+-1} X_{-}\ot_{B^{op}} X_{+} :=\lambda^{-1}(1\ot_{B}X),\quad X_{[+]}\ot^{B^{op}}X_{[-]}:=\mu^{-1}(X\ot_{B}1).\end{equation}
Moreover, as a stronger version of (anti-)right Hopf algebroids, there is a definition of full Hopf algebroids in terms of right bialgebroids, which is equivalent to the left-handed version given in \cite{BS}.
\begin{defi}\label{def. full Hopf algebroid3}\cite{XH1}
    A right bialgebroid $\cR$ over $B$ is a full Hopf algebroid, if there is an invertible anti-algebra map $S:\cR\to \cR$, such that
    \begin{itemize}
        \item $S\circ t=s$,
        \item $X\htw S^{-1}(X\ho)\ho\ot_{B}S^{-1}(X\ho)\htw=1\ot_{B}S^{-1}(X)$,
        \item $S(X\htw)\ho\ot_{B}X\ho S(X\htw)\htw=S(X)\ot_{B}1$.
    \end{itemize}
\end{defi}

\section{Hopf-2-algebras and bicrossed modules of Hopf algebras}\label{sec. Hopf 2-algebras and bicrossed modules of Hopf algebras}

Recall that, a strict 2-group consists of a `1-morphism' group $\mathcal{G}^{0}$ and a `2-morphism' group $\mathcal{G}^{1}$ with `horizontal' group multiplication $\circ: \mathcal{G}^{1}\times \mathcal{G}^{1}\to \mathcal{G}^{1}$. Moreover, $\mathcal{G}^{1}$ is a groupoid with `vertical' groupoid multiplication $\bullet: \mathcal{G}^{1}{}_{s}\times_{t} \mathcal{G}^{1}\to \mathcal{G}^{1}$, such that the `horizontal' and `vertical' products are commutative in the sense $(g\bullet h)\circ (k\bullet l)=(g\circ k)\bullet (h\circ l)$, for any $(g, h), (k, l)\in \mathcal{G}^{1}{}_{s}\times_{t} \mathcal{G}^{1}$. With such a classical picture in mind, we can construct a `quantum' version of a 2-group (in a similar way as in \cite{XH} but the base algebra is not necessarily commutative), which consists of a pair of Hopf algebras $B$ and $H$, such that $H$ is also a Hopf algebroid over $B$ with a cocommutative relation between the Hopf algebra coproduct and the Hopf algebroid coproduct.

\begin{defi}\label{def. Hopf-2-algebra}
A {\em (anti-)right Hopf 2-algebra} consists of a pair of Hopf algebras
$(B, m_{B}, 1_{B}, \Delta_{B}, \varepsilon_{B}, S_{B})$, $(H, m, 1_{H}, \Delta, \varepsilon, S)$, and a (anti-)right Hopf algebroid $(H, m, 1_{H}, \blacktriangle, \varepsilon_{H}, s, t)$ over $B$, such that
\begin{itemize}
    \item [(i)] The underlying algebra structures of the Hopf algebra $(H, m, 1_{H}, \Delta, \varepsilon, S)$ and the (anti-)right Hopf algebroid $(H, m, 1_{H}, \blacktriangle, \varepsilon_{H}, s, t)$ are the same.
    \item[(ii)] The counit $\varepsilon_{H}: H\to B$ is a coalgebra map.
    \item[(iii)] The source and target maps are morphisms of bialgebras.
    \item[(iv)] The two coproducts $\Delta$ and $\blacktriangle$ have the following cocommutation relation:
    \begin{align}\label{equ. cocommutative relation}
        (\blacktriangle\otimes \blacktriangle)\circ \Delta=(\id_{H}\otimes \mathrm{flip}\otimes \id_{H})\circ (\Delta\otimes_{B}\Delta)\circ \blacktriangle,
    \end{align}
    where $\id_{H}\ot \mathrm{flip} \ot \id_{H}:H\ot H\ot_{B\ot B}H\ot H\to H\ot_{B} H\ot H\ot_{B} H$ is given by  $\id_{H}\ot \mathrm{flip} \ot \id_{H}: X\ot  Y\ot_{B\ot B} Z\ot  W\mapsto (X\ot_{B}  Z)\ot (Y\ot_{B}W)$.
\end{itemize}
A (anti-)right Hopf 2-algebra is a \textrm{Hopf 2-algebra}, if the  right bialgebroid $(H, m, 1_{H}, \blacktriangle, \varepsilon_{H}, s, t)$ is a full Hopf algebroid.
\end{defi}

An (anti-)left Hopf 2-algebra can be similarly defined based on a left bialgebroid. Recall that we use different
sumless Sweedler notations for the coproducts of Hopf algebra and Hopf algebroid, namely, $\Delta(X)=X\o \otimes X\t$, $\blacktriangle(X)={}X\ho \ot_{B} X\htw$, for any $X\in H$.
We can see condition  (iv) is well defined since $H\ot H$ has the $B\ot B$-bimodule structure, which is given by
    $(b\ot b')\triangleright(X\ot Y)=Xt(b)\ot Yt(b')$ and $(X\ot  Y)\triangleleft (b\ot b')=Xs(b)\ot Ys(b')$, for any $b\ot b'\in B\ot B$ and $X\ot  Y\in H\ot H$. Indeed, for any $b\in B$ and $X$, $Y\in H$ we have
    \begin{align*}
        (\Delta\ot_{B} \Delta)(X\ot_{B} b\triangleright  Y)=& (\Delta\ot_{B} \Delta)(X\ot_{B}Yt(b))\\
        =&({}X\o\ot {}X\t)\ot_{B\ot B}(Y\o t(b)\o\ot Y\t t(b)\t)\\
        =&({}X\o\ot {}X\t)\ot_{B\ot B}(Y\o t(b\o)\ot Y\t t(b\t))\\
        =&(X\o s(b\o)\ot X\t s(b\t))\ot_{B\ot B}({}Y\o\ot{}Y\t)\\
        =&(X\o s(b)\o\ot X\t s(b)\t)\ot_{B\ot B}({}Y\o\ot{}Y\t)\\
        =&(\Delta\ot_{B} \Delta)(X\triangleleft b \ot_{B} Y),
    \end{align*}
    where the 2nd step uses the fact that $\Delta$ is an algebra map, and the 3rd step uses the fact that $t$ is a coalgebra map. Clearly, the map $\id_{H}\ot \mathrm{flip} \ot \id_{H}: X\ot  Y\ot_{B\ot B} Z\ot  W\mapsto (X\ot_{B}  Z)\ot (Y\ot_{B}W)$ is also well defined for any $X\ot Y, Z\ot  W\in H\ot H$. More precisely, (iv) can be written as
    \begin{align}\label{coco}
{}X\o\ho\ot_{B}X\o{}\htw\ot X\t\ho \ot_{B}X\t\htw={}X\ho\o\ot_{B}X\htw\o\ot X\ho\t\ot_{B}X\htw\t,
    \end{align}
    for any $X\in H$.

To observe that a Hopf 2-algebra is a quantum version of a 2-group,  we first give an example.
\subsection{Example}
Let $(\mathcal{G}^{0},  \mathcal{G}^{1})$ be a finite 2-group, it is given by \cite{AryGho2} that $(A(\mathcal{G}^{0}),  A(\mathcal{G}^{1}))$ the algebras of functions on  $(\mathcal{G}^{0},  \mathcal{G}^{1})$ is a Hopf algebroid. As the algebra $A(\mathcal{G}^{0})$ (resp. $A(\mathcal{G}^{1}$)) is spanned by
elements of the form $f_{g}$ for all $g\in \mathcal{G}^{0}$ (resp. $f_{e}$ for all $e\in \mathcal{G}^{1}$). The multiplication for $(A(\mathcal{G}^{0}),  A(\mathcal{G}^{1}))$ is determined by
\[f_{e}\, f_{e'}=\delta_{e,e'}f_{e},\quad\forall e,e'\in \mathcal{G}^{1};\qquad f_{g}\, f_{g'}=\delta_{g,g'}f_{g},\quad\forall g,g'\in \mathcal{G}^{0}.\]
The units are
\[1_{A(\mathcal{G}^{1})}=\sum_{e\in \mathcal{G}^{1}}f_{e},\qquad 1_{A(\mathcal{G}^{0})}=\sum_{g\in \mathcal{G}^{0}}f_{g}.\]
As  a Hopf algebroid, the source and target maps are
\[s(f_{g})=\sum_{e:s(e)=g}f_{e},
\qquad t(f_{g})=\sum_{e:t(e)=g}f_{e}.\]
The coring structure is
\[\varepsilon_{H}(f_{e})=\sum_{g\in \mathcal{G}^{0}}\delta_{e,\id_{g}}f_{g},\qquad \blacktriangle(f_{e})=\sum_{e_{1},e_{2}:e_{2}\bullet e_{1}=e}f_{e_{1}}\ot_{A(\mathcal{G}^{0})}f_{e_{2}},\]
where $\bullet: \mathcal{G}^{1}{}_{s}\times_{t} \mathcal{G}^{1}\to \mathcal{G}^{1}$ is the `vertical' groupoid multiplication.
The Hopf algebroid antipode is
\[S(f_{e})=f_{e^{-1}},\]
Moreover, since both $A(\mathcal{G}^{0})$ and $A(\mathcal{G}^{1})$ are finite groups, they are also Hopf algebras. More explicitly, the coproducts are
\[\Delta(f_{e})=\sum_{e_{1},e_{2}:e_{1}\circ e_{2}=e}f_{e_1}\ot f_{e_2};\qquad \Delta(f_{g})=\sum_{g_{1},g_{2}:g_{1}\circ g_{2}=g}f_{g_1}\ot f_{g_2},\]
where $\circ: \mathcal{G}^{1}\times \mathcal{G}^{1}\to \mathcal{G}^{1}$ (resp. $\circ: \mathcal{G}^{0}\times  \mathcal{G}^{0}\to \mathcal{G}^{0}$) are the `horizontal' group multiplications. As all the above structures are the dualization of the structure of a 2-group, it is not hard to check $\varepsilon_{H}$, $s$, and $t$ are all coalgebra maps. For (iv) of Definition \ref{def. Hopf-2-algebra}, we have on the one hand,
\begin{align*}
    (\blacktriangle\otimes \blacktriangle)\circ \Delta(f_{e})=\sum_{e_{1},e_{2},e_{3},e_{4}:(e_{2}\bullet e_{1})\circ (e_{4}\bullet e_{3})=e}f_{e_{1}}\ot_{B}f_{e_{2}}\ot f_{e_{3}}\ot_{B}f_{e_{4}}.
\end{align*}
On the other hand,
\begin{align*}
    (\id_{H}\otimes \mathrm{flip}\otimes \id_{H})\circ (\Delta\otimes_{B}\Delta)\circ \blacktriangle(f_{e})=\sum_{e_{1},e_{2},e_{3},e_{4}:(e_{2}\circ e_{4})\bullet (e_{1}\circ e_{3})=e}f_{e_{1}}\ot_{B}f_{e_{2}}\ot f_{e_{3}}\ot_{B}f_{e_{4}}.
\end{align*}
They are equal since the vertical and horizontal products commute.
\begin{rem}
We know that for a classical group $G$, its linearization $\mathbb{K}G$ is Hopf algebra. More precisely, its coalgebra structure is given by
\[\Delta(g)=g\ot g,\qquad \varepsilon(g)=1.\]
However, given a 2-group $(\mathcal{G}^{0}, \mathcal{G}^{1})$, its (partial) linearization is in general not a Hopf 2-algebra.

Scenario I. $H=\mathbb{K}\mathcal{G}^{1}$, $B=A(\mathcal{G}^{0})$. By \cite{AryGho2}, $H$ is a $B$-Hopf algebroid with
\[s(f_{g})=t(f_{g})=\id_{g},\quad \blacktriangle(e)=e\ot_{B}e,\quad \varepsilon_{H}(e)=f_{t(e)},\quad S(e)=e^{-1},\quad 1=\sum_{g\in \mathcal{G}^{0}}\id_{g},\]
for any $g\in \mathcal{G}^{0}$ and $e\in \mathcal{G}^{1}$. However, in general $\varepsilon_{H}$ is not a coalgebra map. Indeed, on the one hand,
\begin{align*}
    (\varepsilon_{H}\ot \varepsilon_{H})\circ \Delta(e)=\varepsilon_{H}(e)\ot \varepsilon_{H}(e)=f_{t(e)}\ot f_{t(e)}.
\end{align*}
On the other hand,
\begin{align*}
    \Delta_{B}\circ \varepsilon_{H}(e)=\sum_{g_1,g_{2}:g_{1}\circ g_{2}=t(e)}f_{g_{1}}\ot f_{g_{2}}.
\end{align*}

Scenario II. $H=\mathbb{K}\mathcal{G}^{1}$, $B=\mathbb{K}\mathcal{G}^{0}$. In this case, $H$ is in general not even a $B$-Hopf algebroid. The linearization of both $\mathcal{G}^{1}$ and $\mathcal{G}^{0}$ is studied in \cite{Majid3} where $(\mathbb{K}\mathcal{G}^{1}, \mathbb{K}\mathcal{G}^{0})$ is called embedded
quantum groupoid.
\end{rem}

\subsection{Bicrossed modules of Hopf algebras}
Motivated by the equivalence between strict 2-groups and crossed modules. Here we first recall the definition of bicrossproduct Hopf algebras and then construct a bicrossed module over a bicrossproduct Hopf algebra by combining the so-called Peiffer identity and co-Peiffer identity \cite{Yael11} in a compatible way.

\begin{defi}(cf. \cite{Majid1} \cite{Majid2})
    Let  $A$ and $B$ be Hopf algebras, such that
    \begin{itemize}
        \item [(i)] $A$ is a left $B$-comodule coalgebra with coaction $\delta$, $B$ is a right $A$-module algebra with action $\ra$,
        \item[(ii)] $\varepsilon(b\ra a)=\varepsilon(b)\varepsilon(a)$ and $\delta(1)=1\ot 1$,
        \item[(iii)] $(b\ra a)\o\ot (b\ra a)\t=(b\o\ra a\o)a\t\umo\ot  b\t\ra a\t\uz$,
        \item[(iv)] $a\o\umo(b\ra a\t)\ot a\o\uz=(b\ra a\o)a\t\umo\ot a\t\uz$.
    \end{itemize}
    The \textit{bicrossproduct} of $A$ and $B$ is a Hopf algebra $A \rlbicross B$ whose underlying vector space is $A\ot B$, with the coproduct and counit
    \[\Delta(a\ot b)=a\o\ot a\t\umo b\o\ot a\t\uz\ot b\t, \quad \varepsilon(a\ot b)=\varepsilon(a)\varepsilon(b),\]
    the product and unit
    \[(a\ot b)(a'\ot b')=aa'\o\ot (b\ra a'\t) b', \quad 1\ot 1.\]
    Moreover, the antipode is
    \[S(a\ot b)=S(a\uz\t)\ot S(a\umo b)\ra S(a\uz\o),\]
    for any $a, a'\in A$ and $b, b'\in  B$.
\end{defi}

\begin{thm}\label{thm. bicrossed module}
        Let $A\rlbicross B$ be a bicrossproduct of two Hopf algebras $A$ and $B$, if the antipode of $A$ is bijective and $\phi: B^{op}\to A$ is a bialgebra morphism, such that    \begin{itemize}
        \item [(1)] $\phi(b)\umo\ot\phi(b)\uz=b\o S({}b\th)\ot \phi(b\t)$,
        \item[(2)] $\phi(a\umo)\ot a\uz= S^{-1}({}a\th)a\o\ot a\t$,
        \item[(3)] $\phi(b\ra a)=S^{-1}(a\t)\phi(b)a\o$,
        \item[(4)] $b'\ra \phi(b)=b\o b' S(b\t)$,
    \end{itemize}
    for all $a\in A$, $b, b'\in B$. Then $A\rlbicross B$ is an (anti-)right Hopf 2-algebra, with the source and target map
    \[s(b)=1\ot b,\qquad t(b)=\phi(b\o)\ot b\t,\]
    the coproduct and counit of the bialgebroid
    \[\blacktriangle(a\ot b)=a\o\ot 1\ot_{B} a\t\ot b,\qquad \varepsilon_{A \rlbicross B}(a\ot b)=\varepsilon(a)b,\]
    for any $a\in A$ and $b\in B$. The inverse of the canonical map $\lambda$ is determined by
    \[(a\ot b)_{-}\ot_{B^{op}}(a\ot b)_{+}=S(a\o)\ot 1\ot_{B^{op}}a\t\ot b,\]
    and the inverse of the anti-canonical map $\mu$ is determined by
    \[(a\ot b)_{[+]}\ot^{B^{op}}(a\ot b)_{[-]}=a\o\ot 1\ot^{B^{op}}\phi(b\o)S^{-1}({}a\th)\ot b\t\ra S^{-1}(a\t).\]
    We call such a bicrossproduct Hopf algebra $(A\rlbicross B, \phi)$  a \textup{bicrossed module} of Hopf algebras.
    Moreover, if the antipode of $A$ satisfies $S^{2}=\id_{A}$, then $A\rlbicross B$ is a Hopf 2-algebra with the antipode of the full Hopf algebroid
    \[S_{A\rlbicross B}(a\ot b)=\phi(b\o)S(a\t)\ot b\t\ra S(a\o).\]

\end{thm}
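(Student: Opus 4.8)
The plan is to check the axioms one family at a time, in the order in which the structure is built up in Definition~\ref{def. Hopf-2-algebra}: first that $(A\rlbicross B,s,t,\blacktriangle,\varepsilon_{A\rlbicross B})$ is a right $B$-bialgebroid in the sense of Definition~\ref{def:right.bgd}; then that the canonical maps $\lambda$ and $\mu$ of Definition~\ref{defHopf1} are bijective, with the two inverses exactly as written; then the three compatibilities (ii)--(iv) of a Hopf $2$-algebra; and finally, under the extra hypothesis $S^2=\id_A$, the three axioms of Definition~\ref{def. full Hopf algebroid3} for $S_{A\rlbicross B}$. Condition~(i) of Definition~\ref{def. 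Hopf-2-algebra} is automatic, since both structures are carried by the single algebra $A\rlbicross B$. The only algebraic inputs throughout will be the bialgebra axioms of $A$ and $B$, the bicrossproduct compatibilities (i)--(iv), the four hypotheses (1)--(4) on $\phi$, the fact that $\phi$ intertwines the (anti)antipodes as a $B^{op}\to A$ bialgebra map, and the antipode identities $a\o S(a\t)=\varepsilon(a)=S(a\o)a\t$ and $S^{-1}(a\t)a\o=\varepsilon(a)=a\t S^{-1}(a\o)$.

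I would begin with the bialgebroid. That $s(b)=1\ot b$ is an algebra map is immediate from the module axiom $b\ra 1_A=b$; that $t(b)=\phi(b\o)\ot b\t$ is an algebra map on $B^{op}$, and that $s$ and $t$ commute, both follow from $\phi$ being a bialgebra morphism together with hypothesis~(4). Since $\blacktriangle(a\ot b)=a\o\ot 1\ot_B a\t\ot b$ uses only the coproduct of $A$ on the first leg, coassociativity and the counit axioms of the $B$-coring are formal, multiplicativity of $\blacktriangle$ collapses to the right module algebra axiom $1_B\ra a=\varepsilon(a)1_B$ for $\ra$, and the right character property of $\varepsilon_{A\rlbicross B}(a\ot b)=\varepsilon(a)b$ follows from $\varepsilon\circ\phi=\varepsilon_B$ and the counit of $A$. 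The one substantial point is that $\blacktriangle$ corestricts to the Takeuchi product $\cR\times_B\cR$, i.e.\ that left multiplication by $s(c)$ on the first leg agrees with left multiplication by $t(c)$ on the second; I expect to prove this by transporting the resulting $B$-factor through $\Delta_A$ by means of the bicrossproduct compatibilities (iii)--(iv) relating $\ra$ and $\delta$, together with hypotheses (2)--(3).

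For the Hopf algebroid structure I would not re-derive the inverses but simply verify that the stated formulas are two-sided inverses of $\lambda$ and $\mu$. Evaluating $\lambda$ on $S(a\o)\ot 1\ot_{B^{op}}a\t\ot b$ collapses to $1\ot_B(a\ot b)$ using only $S(a\o)a\t=\varepsilon(a)$, and the reverse composite is equally short. The check for $\mu$ is heavier: applying $\mu$ to $a\o\ot 1\ot^{B^{op}}\phi(b\o)S^{-1}(a\th)\ot b\t\ra S^{-1}(a\t)$ unwinds the bicrossproduct product and then has to collapse back to $(a\ot b)\ot_B 1$, which calls on hypotheses (3)--(4), the coproduct formula (iii) for $b\ra a$, and repeated use of the $S^{-1}$ antipode identities. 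With the (anti-)Hopf algebroid in hand, the three Hopf $2$-algebra conditions are more transparent: condition~(ii), that $\varepsilon_{A\rlbicross B}$ is a coalgebra map, reduces to the comodule-coalgebra counit identity $a\umo\varepsilon(a\uz)=\varepsilon(a)1_B$; condition~(iii), that $s$ and $t$ are bialgebra morphisms, uses $\delta(1)=1\ot 1$ for $s$ and hypothesis~(1), the coaction on $\phi(b)$, for $t$; and the cocommutation relation~(iv), in the form~\eqref{coco}, follows from the comodule-coalgebra comultiplicativity identity $a\umo\ot a\uz\o\ot a\uz\t=a\o\umo a\t\umo\ot a\o\uz\ot a\t\uz$, after one matches the two bracketings of the $B$-relative legs by the balancing relation $Xs(\beta)\ot_B Y=X\ot_B Yt(\beta)$.

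The last part, the full Hopf algebroid antipode under $S^2=\id_A$, is where I expect the main difficulty to lie. First I would check $S_{A\rlbicross B}\circ t=s$ and that $S_{A\rlbicross B}(a\ot b)=\phi(b\o)S(a\t)\ot b\t\ra S(a\o)$ is an invertible anti-algebra map, the anti-multiplicativity resting on the anti-multiplicativity of $S$, the bialgebra property of $\phi$, and hypotheses (3)--(4). The crux is the pair of mixed antipode-coproduct identities of Definition~\ref{def. full Hopf algebroid3}, namely $X\htw S^{-1}(X\ho)\ho\ot_B S^{-1}(X\ho)\htw=1\ot_B S^{-1}(X)$ and $S(X\htw)\ho\ot_B X\ho S(X\htw)\htw=S(X)\ot_B 1$ with $S=S_{A\rlbicross B}$: each expands into a long Sweedler computation in which $\ra$, $\delta$, $\phi$ and the antipode all interact, and it is precisely here that all four hypotheses (1)--(4), together with the simplification $S^2=\id_A$ (which converts $a\t S(a\o)$ into $\varepsilon(a)$), are used at once to make the $B$-relative tensor legs collapse. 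I would organise this step by first reducing both identities to statements purely in $A$ and $B$ through the explicit product and coproduct formulas, and then matching terms via (1)--(4); it is this bookkeeping, rather than any single conceptual step, that constitutes the heart of the proof.
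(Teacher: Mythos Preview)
Your proposal is correct and follows essentially the same route as the paper's own proof: establish the right $B$-bialgebroid axioms (with the Takeuchi corestriction as the one substantive step, handled via bicrossproduct axiom~(iii) together with hypotheses~(2)--(3)), verify the stated $\lambda^{-1}$ and $\mu^{-1}$ directly, then check the Hopf-2-algebra compatibilities (ii)--(iv), and finally the full-antipode identities under $S^2=\id_A$. Two small calibrations: the $\mu^{-1}$ verification is in fact lighter than you anticipate (only the $S^{-1}$ antipode identities and the balancing $\phi(b\o)\ot b\t=t(b)$ are used, not hypotheses~(3)--(4) or bicrossproduct~(iii)), while conversely the cocommutation relation~(iv) needs hypothesis~(2) explicitly once the balancing relation introduces $\phi(a\umo)$ on the second leg---the comodule-coalgebra comultiplicativity alone does not close the computation.
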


\begin{proof}
    The source map is clearly a bialgebra map. We can also see the target map $t:B^{op}\to A\rlbicross B$ is a bialgebra map,
    \begin{align*}
        t(b)t(b')=&\phi(b\o)\phi(b'\o)\o\ot b\t \ra\phi(b'\o)\t b'\t\\
        =&\phi(b\o)\phi(b'\o)\ot b\t \ra\phi(b'\t) b'\th\\
        =&\phi(b\o)\phi(b'\o)\ot b'\t b\t\\
        =&\phi(b'\o b\o)\ot b'\t b\t\\
        =&t(b'b),
    \end{align*}
    where the 2nd step uses the fact that $\phi$ is an coalgebra map, the 3rd step uses condition (4), the 4th step uses the fact that $\phi$ is an antialgebra map from  $B$ to $A$. We also have
    \begin{align*}
        \Delta(t(b))=&\phi(b\o)\ot \phi(b\t)\umo b\th \ot \phi(b\t)\uz\ot{}b\fo\\
        =&\phi(b\o)\ot b\t\ot \phi({}b\th)\ot b\fo\\
        =&t(b\o)\ot t(b\t),
    \end{align*}
    where the 2nd step uses condition (1).
    By the same method, we can also see the source and target maps have commuting ranges,
    \begin{align*}
        s(b')t(b)=\phi(b\o)\ot (b'\ra \phi(b\t)){}b\th=\phi(b\o)\ot b\t b'=t(b)s(b').
    \end{align*}
We can see both of the counit and coproduct $\blacktriangle$ are $B$-bilinear, here we only check the coproduct is left $B$-linear:
\begin{align*}
    \blacktriangle((a\ot b)t(b'))=&\blacktriangle(a\phi(b'\o)\ot b'\t b)\\
    =&(a\o \phi(b'\o)\ot 1)\ot_{B}(a\t\ot b)(\phi(b'\t)\ot b'\th)\\
    =&(a\o \phi(b'\o)\ot 1)\ot_{B}(a\t\ot b)t(b'\t)\\
    =&(a\o \phi(b'\o)\ot 1)s(b'\t)\ot_{B}(a\t\ot b)\\
    =&(a\o \ot 1)t(b')\ot_{B}(a\t\ot b).
\end{align*}
So it is not hard to see $A\rlbicross B$ is a $B$-coring.
Now, let's show the image of $\blacktriangle$ belongs to the Takeuchi product,
\begin{align*}
    s(b')(a\o\ot 1)&\ot_{B} (a\t\ot b)\\
    =&a\o\ot b'\ra a\t\ot_{B} {}a\th\ot b\\
    =&(a\o\ot 1)s(b'\ra a\t)\ot_{B}(a\th\ot b)\\
    =&(a\o\ot 1)\ot_{B}(a\th\ot b)t(b'\ra a\t)\\
    =&(a\o\ot 1)\ot_{B}a\th \phi((b'\ra a\t)\o)\ot b\ra \phi((b'\ra a\t)\t)(b'\ra a\t)\th\\
   =&(a\o\ot 1)\ot_{B}a\th \phi((b'\ra a\t)\o)\ot (b'\ra a\t)\t b\\
   =&(a\o\ot 1)\ot_{B}a\fo \phi((b'\o\ra a\t)a\th\umo)\ot b'\t\ra a\th\uz b\\
    =&(a\o\ot 1)\ot_{B}a\fo S^{-1}(a\th\th) a\th\o S^{-1}(a\t\t)\phi(b'\o)a\t\o\ot b'\t\ra a\th\t b \\
    =&(a\o\ot 1)\ot_{B}\phi(b'\o)a\t\ot b'\t\ra a\th b\\
    =&(a\o\ot 1)\ot_{B} t(b')(a\t\ot b),
\end{align*}
where the 6th step uses condition (iii) of bicrossproduct, and the 7th step uses conditions (2) and (3) with the fact that $\phi$ is an antialgebra map from $B$ to $A$. It is not hard to see the coproduct $\blacktriangle$ is an algebra map. It is also a straightforward computation to check the counit satisfy $\varepsilon_{A\rlbicross B}(XY)=\varepsilon_{A\rlbicross B}(s(\varepsilon_{A\rlbicross B}(X))Y)=\varepsilon_{A\rlbicross B}(t(\varepsilon_{A\rlbicross B}(X))Y)$ for any $X, Y\in A\rlbicross B$. Therefore, $A\rlbicross B$ is a right bialgebroid over $B$. Moreover, we can see $\varepsilon_{A\rlbicross B}$ is a coalgebra map. Now, let's check condition (iv) of Definition \ref{def. Hopf-2-algebra}. On the one hand,
\begin{align*}
    (\blacktriangle\otimes \blacktriangle)\circ \Delta(a\ot b)=(a\o\ot 1)\ot_{B}(a\t\ot a\th\umo b\o)\ot (a\th\uz\o\ot 1)\ot_{B}(a\th\uz\t\ot b\t).
\end{align*}
On the other hand,
\begin{align*}
    (\id\otimes &\mathrm{flip}\otimes \id)\circ (\Delta\otimes_{B}\Delta)\circ \blacktriangle(a\ot b)\\
    =&(a\o\ot a\t\umo)\ot_{B}(a\th\ot a\fo\umo b\o)\ot (a\t\uz\ot 1)\ot_{B}(a\fo\uz\ot b\t)\\
    =&(a\o\ot 1) s(a\t\umo)\ot_{B}(a\th\ot a\fo\umo b\o)\ot (a\t\uz\ot 1)\ot_{B}(a\fo\uz\ot b\t)\\
    =&(a\o\ot 1) \ot_{B}(a\th\ot a\fo\umo b\o)t(a\t\umo)\ot (a\t\uz\ot 1)\ot_{B}(a\fo\uz\ot b\t)\\
    =&(a\o\ot 1) \ot_{B}(a\th \phi(a\t\umo\o)\ot a\t\umo\t a\fo\umo b\o)\ot (a\t\uz\ot 1)\ot_{B}(a\fo\uz\ot b\t)\\
    =&(a\o\ot 1) \ot_{B}(a\th \phi(a\t\umo)\ot a\t\uz\umo a\fo\umo b\o)\ot (a\t\uz\uz\ot 1)\ot_{B}(a\fo\uz\ot b\t)\\
    =&(a\o\ot 1) \ot_{B}(a\th S^{-1}(a\t\th) a\t\o\ot a\t\t\umo a\fo\umo b\o)\ot (a\t\t\uz\ot 1)\ot_{B}(a\fo\uz\ot b\t)\\
    =&(a\o\ot 1) \ot_{B}(a\t\ot a\th\umo a\fo\umo b\o)\ot (a\th\uz\ot 1)\ot_{B}(a\fo\uz\ot b\t)\\
    =&(a\o\ot 1) \ot_{B}(a\t\ot a\th\umo b\o)\ot (a\th\uz\o\ot 1)\ot_{B}(a\th\uz\t\ot b\t),
\end{align*}
where the 6th step uses condition (2). Now we first show the canonical map $\lambda$ is invertible,
\begin{align*}
    \lambda\circ \lambda^{-1}(1\ot 1\ot_{B}a\ot b)=(S(a\o)\ot 1)(a\t\ot 1)\ot_{B}(a\th\ot b)=1\ot 1\ot_{B}a\ot b;
\end{align*}
and
\begin{align*}
    \lambda^{-1}\circ\lambda(1\ot 1\ot_{B^{op}}a\ot b)=(a\o\ot 1)(S(a\t)\ot 1)\ot_{B^{op}}(a\th\ot b)=1\ot 1\ot_{B^{op}}a\ot b.
\end{align*}
Second, we show the anti-canonical map $\mu$ is invertible,
\begin{align*}
    \mu\circ\mu^{-1}(a\ot b\ot_{B} 1\ot 1)=&a\o\ot 1\ot_{B} \phi(b\o) S^{-1}(a\five)a\t\ot b\t\ra (S^{-1}(a\fo)a\th)\\
    =&a\ot 1\ot_{B} \phi(b\o)\ot b\t\\
    =&a\ot b\ot_{B}1\ot 1;
\end{align*}
and
\begin{align*}
    \mu^{-1}\circ \mu (a\ot b\ot^{B^{op}} 1\ot 1)=&a\o\ot 1\ot^{B^{op}} (a\th\ot b)(S^{-1}(a\t)\ot 1)\\
    =&a\o\ot 1\ot^{B^{op}}1\ot b\ra S^{-1}(a\t)\\
    =&s(b\ra S^{-1}(a\t))(a\o\ot 1)\ot^{B^{op}}1\ot 1\\
    =&a\ot b\ot^{B^{op}}1\ot 1.
\end{align*}

Moreover, if the antipode of $A$ satisfies $S^{2}=\id_{A}$, then we can see $S_{A\rlbicross B}$ is invertible with $S_{A\rlbicross B}=S_{A\rlbicross B}^{-1}$,
\begin{align*}
    S_{A\rlbicross B}^{2}&(a\ot b)\\
    =&\phi((b\t\ra S(a\o))\o) S(\phi(b\o)\t S(a\t)\t)\ot (b\t\ra S(a\o))\t\ra S(\phi(b\o)\o S(a\t)\o)\\
    =&\phi((b\th\ra S(a\t))S(a\o)\umo)a\th S(\phi(b\t))\ot(b\fo\ra S(a\o)\uz)\ra (a\fo S(\phi(b\o)))\\
    =&\phi(S(a\o)\umo)\phi(b\th\ra S(a\t))a\th S(\phi(b\t))\ot(b\fo\ra S(a\o)\uz)\ra (a\fo S(\phi(b\o)))\\
    =&a\o S(a\th) a\fo \phi(b\th)S(a\five) a\si S(\phi(b\t))\ot b\fo\ra(S(a\t)a\se S(\phi(b\o)))\\
    =&a\phi(b\th)S(\phi(b\t))\ot b\fo\ra S(\phi(b\o))\\
    =&a\ot b\t\ra S(\phi(b\o))\\
    =&a\ot S(b\t)b\th b\o\\
    =&a\ot b,
\end{align*}
where the 6th step uses the fact that $S^{-1}(\phi(b))=S^{-1}(\phi(b\th))\phi(b\t) \phi(S(b\o))=\phi(S(b))$.
To show the antipode $S_{A\rlbicross B}$ is an antialgebra map, we can check on the one hand,
\begin{align*}
    S_{A\rlbicross B}&((a\ot b)(a'\ot b'))\\
    =&\phi((b\ra a'\th)\o b'\o)S(a\t a'\t)\ot ((b\ra a'\th)\t b'\t)\ra S(a\o a'\o)\\
    =&\phi(b'\o) \phi((b\o\ra a'\th)a'\fo\umo)S(a'\t)S(a\t)\ot ((b\t\ra a'\fo\uz)b'\t)\ra S(a\o a'\o)\\
    =&\phi(b'\o) S(a'\fo\th)a'\fo\o S(a'\th\t)\phi(b\o)a'\th\o S(a'\t) S(a\t)\\
    {}&\ot ((b\t\ra a'\fo\t)b'\t)\ra S(a\o a'\o)\\
    =&\phi(b'\o)S(a'\t)\phi(b\o)S(a\t)\ot (b\t( b'\t\ra S(a'\o)))\ra S(a\o).
\end{align*}
 On the other hand,
 \begin{align*}
     S_{A\rlbicross B}&(a'\ot b') S_{A\rlbicross B}(a\ot b)\\
     =&\phi(b'\o)S(a'\t)\phi(b\o)S(a\th)\ot (b'\t\ra (S(a'\o) \phi(b\t)S(a\t)))(b\th \ra S(a\o))\\
     =&\phi(b'\o)S(a'\t)\phi(b\o)S(a\t)\ot (b\t( b'\t\ra S(a'\o)))\ra S(a\o).
 \end{align*}
We can also see that
\begin{align*}
    S^{-1}_{A\rlbicross B}(s(b))=S_{A\rlbicross B}(s(b))=\phi(b\o)\ot b\t=t(b).
\end{align*}
For any $X=a\ot b\in A\rlbicross B$, we have on the other hand,
\begin{align*}
   S_{A\rlbicross B}(X\htw)\ho &\ot_{B}X\ho
 S_{A\rlbicross B}(X\htw)\htw\\
 =&\phi(b\o)S(a\fo)\ot 1\ot_{B}(a\o\ot 1)(\phi(b\t)S(a\th)\ot b\th \ra S(a\t))\\
 =&\phi(b\o)S(a\fo)\ot 1\ot_{B}a\o\phi(b\t)S(a\th)\ot b\th \ra S(a\t),
\end{align*}
on the other hand,
\begin{align*}
    S_{A\rlbicross B}(X)\ot_{B}1
=&(\phi(b\o)S(a\t)\ot 1)s(b\t\ra S(a\o))\ot_{B}1\ot 1\\
=&(\phi(b\o)S(a\t)\ot 1)\ot_{B}(1\ot 1)t(b\t\ra S(a\o))\\
=&(\phi(b\o)S(a\t)\ot 1)\ot_{B}\phi((b\t\ra S(a\o))\o)\ot (b\t\ra S(a\o))\t\\
=&(\phi(b\o)S(a\th)\ot 1)\ot_{B}\phi(S(a\o)\umo)\phi(b\t\ra S(a\t))\ot b\th \ra S(a\o)\uz\\
=&(\phi(b\o)S(a\si)\ot 1)\ot_{B} a\o S(a\th) a\fo \phi(b\t) S(a\five)\ot b\th\ra S(a\t)\\
=&\phi(b\o)S(a\fo)\ot 1\ot_{B}a\o\phi(b\t)S(a\th)\ot b\th \ra S(a\t).
\end{align*}

And
\begin{align*}
    X\htw S^{-1}_{A\rlbicross B}(X\ho)\ho&\ot_{B} S^{-1}_{A\rlbicross B}(X\ho)\htw\\
    =&a\th S(a\t)\o\ot b\ra S(a\t)\t\ot_{B} S(a\o)\ot 1\\
    =&1\ot b\ra S(a\t)\ot_{B} S(a\o)\ot 1\\
    =&1\ot 1\ot_{B}(S(a\o)\ot 1) t(b\ra S(a\t))\\
    =&1\ot 1\ot_{B}S(a\o)\phi(S(a\t)\umo)\phi(b\o\ra S(a\th))\ot b\t\ra S(a\t)\uz\\
    =&1\ot 1\ot_{B}\phi(b\o)S(a\t)\ot b\t\ra S(a\o)\\
    =&1\ot 1\ot_{B} S^{-1}_{A\rlbicross B}(X).
\end{align*}
\end{proof}

\begin{rem}
    It is given in Corollary 5.9 of \cite{XH} that given two Hopf algebras $A$ and $B$(with $B$ commutative), if $A$ is a left $B$-comodule coalgebra such that a version of (1) and (2) in Theorem \ref{thm. bicrossed module} is satisfied, then $(H=A\ot B, B)$ is a Hopf 2-algebra with factorwise multiplication. However, in that case, we have to require that $B$ is commutative.
\end{rem}

It is known (\cite{Majid2}) that given a Hopf algebra with a bijective antipode, $H\rlbicross H_{cop}$ is a bicrossproduct. Here $H_{cop}$ is a right module algebra of $H$ with the action given by $g\ra h=S(h\o)g h\t$, and $H$ is a left comodule coalgebra of $H_{cop}$ with  the coaction given by $\delta(h)=S(h\o)h\th\ot h\t$ for any $h\in H$ and $g\in H_{cop}$.

\begin{thm}
Let $H$ be a Hopf algebra with bijective antipode $S$, then $(H\rlbicross H_{cop}, S^{-1})$ is a bicrossed module of Hopf algebras.
\end{thm}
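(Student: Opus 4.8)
The plan is to obtain this statement as a special case of Theorem~\ref{thm. bicrossed module}, specialising $A = H$, $B = H_{cop}$ and $\phi = S^{-1}$, where $S$ denotes the (bijective) antipode of $H$. The bicrossproduct datum is the one recalled immediately before the statement, taken from \cite{Majid2}: the right action $g \ra h = S(h\o)\,g\,h\t$ of $H$ on $H_{cop}$ and the left coaction $\delta(h) = S(h\o)\,h\th \ot h\t$ of $H_{cop}$ on $H$. Since $H \rlbicross H_{cop}$ is already known to be a bicrossproduct, axioms (i)--(iv) of a bicrossproduct hold, and it remains only to verify the three hypotheses of Theorem~\ref{thm. bicrossed module}: that the antipode of $A = H$ is bijective (which is assumed), that $\phi = S^{-1}\colon B^{op} \to A$ is a bialgebra morphism, and that conditions (1)--(4) are satisfied.

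For the bialgebra-morphism requirement I would first note that $B^{op} = (H_{cop})^{op}$ is the Hopf algebra obtained from $H$ by reversing both product and coproduct. It is a standard consequence of the antipode axioms that $S$ is an anti-algebra and anti-coalgebra map preserving unit and counit, i.e.\ that $S\colon H \to (H_{cop})^{op}$ is a morphism of bialgebras; as $S$ is bijective, its inverse $\phi = S^{-1}\colon B^{op} = (H_{cop})^{op} \to H = A$ is again a bialgebra morphism. I would also record at this point that the antipode of $B = H_{cop}$ equals $S^{-1}$, which is the meaning of the symbol $S$ (the antipode of $B$) occurring in conditions (1) and (4) after specialisation.

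Conditions (2) and (3) are then essentially immediate from the anti-multiplicativity of $S^{-1}$. Indeed, since $a\umo = S(a\o)\,a\th$, one has $\phi(a\umo) = S^{-1}(S(a\o)\,a\th) = S^{-1}(a\th)\,a\o$, which gives (2); and since $b \ra a = S(a\o)\,b\,a\t$, one has $\phi(b \ra a) = S^{-1}(S(a\o)\,b\,a\t) = S^{-1}(a\t)\,\phi(b)\,a\o$, which is (3). Conditions (1) and (4) require a little more: here I would first compute the iterated coproduct of $S^{-1}(b)$ using the anti-comultiplicativity of $S^{-1}$, then substitute into $\delta$, respectively into the action, and finally re-express the outcome through the coopposite Sweedler components of $b \in H_{cop}$. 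For (4) this produces $b' \ra \phi(b) = b\t\,b'\,S^{-1}(b\o)$ (Sweedler indices in $H$), which is exactly the right-hand side once the antipode of $B$ is read as $S^{-1}$ and the coproduct of $B = H_{cop}$ as the flipped coproduct of $H$; (1) is treated identically.

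The individual computations are routine, so the only genuine obstacle is the bookkeeping. One must consistently separate the antipode $S$ of $A = H$ from the antipode $S^{-1}$ of $B = H_{cop}$, and keep the coopposite flips straight when translating between $H$- and $H_{cop}$-Sweedler notation; doing so is precisely what makes the pattern of $S$ versus $S^{-1}$ in conditions (1)--(4) come out correctly. Finally, I would note that the statement asserts only a bicrossed module; upgrading it to a full Hopf $2$-algebra would, by Theorem~\ref{thm. bicrossed module}, additionally require $S^2 = \id_H$, which is not assumed here.
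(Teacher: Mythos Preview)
Your proposal is correct and follows essentially the same route as the paper: specialise Theorem~\ref{thm. bicrossed module} to $A=H$, $B=H_{cop}$, $\phi=S^{-1}$, observe that $S^{-1}\colon (H_{cop})^{op}\to H$ is a bialgebra map, and then verify conditions (1)--(4) directly from anti-(co)multiplicativity of $S^{-1}$ together with the given action and coaction. Your explicit computations for (2) and (3) coincide verbatim with the paper's, and your description of how to handle (1) and (4) --- tracking the coopposite Sweedler indices and using that the antipode of $H_{cop}$ is $S^{-1}$ --- is exactly what the paper does in its one-line verifications.
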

\begin{proof}
    We know $S^{-1}:H_{cop}^{op}\to H$ is a Hopf algebra map. For condition (1), we can see
    \begin{align*}
        S^{-1}(g)\umo\ot S^{-1}(g)\uz=&S(S^{-1}(g)\o)S^{-1}(g)\th\ot S^{-1}(g)\t=g\th S^{-1}(g\o)\ot S^{-1}(g\t),
    \end{align*}
since $S^{-1}$ is the antipode of $H_{cop}$, we have (1). For condition (2),
\begin{align*}
    S^{-1}(h\umo)\ot h\uz=S^{-1}(S(h\o)h\th)\ot h\t=S^{-1}(h\th)h\o\ot h\t.
\end{align*}
For condition (3),
\begin{align*}
    S^{-1}(g\ra h)=S^{-1}(S(h\o)g h\t)=S^{-1}(h\t) S^{-1}(g) h\o.
\end{align*}
For condition (4),
\begin{align*}
    g'\ra S^{-1}(g)=S(S^{-1}(g)\o)g' S^{-1}(g)\t=g\t g' S^{-1}(g\o),
\end{align*}
where we use the fact that $S^{-1}$ is the antipode of $H_{cop}$.
\end{proof}

\subsection{Example} Recall \cite{Swe} \cite{Taft} that a Sweedler's Hopf algebra $H^{4}$ is a Hopf algebra generated by two elements $x, g$, with the commutation relation:
\[x^{2}=0,\quad g^{2}=1, \quad xg+gx=0,\]
the coproduct
\[\Delta(x)=1\ot x+x\ot g,\quad \Delta(g)=g\ot g,\]
the counit
\[\varepsilon(x)=0, \quad\varepsilon(g)=1,\]
the antipode
\[S(x)=-xg^{-1},\quad S(g)=g^{-1}.\]
It is not hard to see that $S^{2}=\id$, so by Theorem \ref{thm. bicrossed module}, $H^{4}\rlbicross H^{4}_{cop}$ is a Hopf 2-algebra. We also compute the Hopf algebroid antipode on the generators. More precisely, we have
\begin{align*}
    S_{H^{4}\rlbicross H^{4}_{cop}}(g\ot g)=&1\ot g,\\
    S_{H^{4}\rlbicross H^{4}_{cop}}(g\ot x)=&-1\ot x-x\ot 1,\\
    S_{H^{4}\rlbicross H^{4}_{cop}}(x\ot g)=&x\ot g+1\ot 2gx,\\
    S_{H^{4}\rlbicross H^{4}_{cop}}(x\ot x)=&(-x^{2}-x)\ot 1+x\ot x.
\end{align*}


\begin{thebibliography}{99}



\bibitem{BL}
J. Baez, A. Lauda,
\emph{HDA V: 2-Groups}.
Theory and Applications of Categories 12 (2004), 423--491.


\bibitem{BS}
G. B\"ohm, K. Szlach\'anyi
{Hopf algebroids with bijective antipodes:
axioms, integrals, and duals}, J. Algebra, 274 (2004) 708--750.





\bibitem{Yael11}
Y. Fr\'egier,  F. Wagemann
\emph{On Hopf 2-algebras}.
International Mathematics Research Notices, Volume 2011, Issue 15, 2011, Pages 3471--3501.

\bibitem{AryGho2} A.\ Ghobadi,
Isotropy quotients of Hopf algebroids and the fundamental groupoid of digraphs, J. Algebra {\bf 610} (2022) 591--631

\bibitem{XH}
X. Han
\emph{On coherent Hopf 2-algebras}.
	arXiv:2005.11207 [math.QA]

\bibitem{XH1}
X. Han
\emph{Hopf bimodules and Yetter-Drinfeld modules of Hopf algebroids}.
J. Algebra, Vol. 668 (2025) 491--532


\bibitem{Majid1}
S. Majid,
\emph{Physics for algebraists: non-commutative and non-cocommutative Hopf algebras by a bicrossproduct construction}.
J. Algebra 130 (1990) 17--64.



\bibitem{Majid2}
S. Majid,
\emph{Foundations of quantum group theory}.
 CUP 1995 and 2000.


 	
\bibitem{Majid3}
S. Majid,
\emph{Strict quantum 2-groups}.
 	arXiv:1208.6265v1 [math.QA]. 	



\bibitem{Por08}
S. Porst,
\emph{Strict 2-Groups are Crossed Modules}.
 	arxiv.org/abs/0812.1464 [math.CT].

\bibitem{Swe}M. Sweedler, {Hopf Algebras}, Benjamin, 1969.

\bibitem{Taft}
E.J. Taft,
\emph{The order of the antipode of finite-dimensional Hopf algebra}.
Proc. Nat. Acad. Sci. USA, 68 (1971) 2631--2633.

 	

\end{thebibliography}
\end{document}